\newtheorem{thm}{Theorem}
\newtheorem{lem}[thm]{Lemma}
\theoremstyle{definition}
\theoremstyle{remark}
\newcommand{\refeq}[1]{\textup{(\ref{eq:#1})}}
\newcommand{\refthm}[1]{Theorem \ref{thm:#1}}
\newcommand{\reflem}[1]{Lemma \ref{lem:#1}}
\numberwithin{equation}{section}
\numberwithin{thm}{section}
\newcommand{\const}{C}
\newcounter{const}
\newcommand{\cnst}{\refstepcounter{const}\const_{\theconst}}
\newcommand{\clabel}[1]{\cnst\label{#1}}
\newcommand{\refc}[1]{\const_{\ref{c:#1}}} %\texup is incompatible with hyperre
\newcommand{\qtext}{\quad\text}
\newcommand{\sm}{\setminus}
\newcommand{\grad}{\nabla}
\newcommand{\bd}{\partial}         % boundary
\newcommand{\ol}[1]{\overline{#1}} 
\newcommand{\til}[1]{\tilde{#1}} 
\newcommand{\str}[1]{{#1}^*}
\newcommand{\R}{{\mathbb R}}
\newcommand{\Z}{{\mathbb Z}}
\newcommand{\inv}{^{-1}}
\newcommand{\normi}[1]{\|#1\|_\infty} % Linfty norm
\newcommand{\pder}[2]{\if@display\dfrac{\partial#1}{\partial#2}
\else\partial#1/\partial#2\fi}
\newcommand{\union}{\mathop{
\if@display
\bigcup\else\operatorname{\hbox{\small$\bigcup$}}
\fi}}
\newcommand{\di}{n}			    %% dimension is \di
\newcommand{\Rd}{{\R^\di}}		%% Eulcidean space
\newcommand{\dom}{D}
\newcommand{\bdy}{\bd\dom}
\begin{document}

\title[Remark on Euler flow around a corner]{Remark on single exponential bound of the vorticity gradient for the two-dimensional Euler flow around a corner}

\author{Tsubasa Itoh}
\address{
Department of Mathematics,
Tokyo Institute of Technology,
Oh-okayama Meguro-ku Tokyo 152-8551, Japan
}
\email{tsubasa@math.titech.ac.jp}

\author{Hideyuki Miura}
\address{
Graduate School of Information Science 
and Engineering Mathematical and Computing Sciences, 
Tokyo Institute of Technology,
Oh-okayama Meguro-ku Tokyo 152-8551, Japan
}
\email{miura@is.titech.ac.jp}

\author{Tsuyoshi Yoneda}
\address{
Department of Mathematics,
Tokyo Institute of Technology,
Oh-okayama Meguro-ku Tokyo 152-8551, Japan
}
\email{yoneda@math.titech.ac.jp}

\subjclass[2010]{35Q31,76B03}
\keywords{two-dimensional Euler equation, vorticity gradient growth, hyperbolic flow, Green function}

%\thanks{This work was supported in part by 
%JSPS KAKENHI Grant Numbers 25287015 and 25610017.}

\begin{abstract}
In this paper, the two dimensional Euler  flow under a simple symmetry condition with hyperbolic structure in  a unit square
$\dom=\{(x_1,x_2):0<x_1+x_2<\sqrt{2},0<-x_1+x_2<\sqrt{2}\}$ 
  is considered. It is shown that the Lipschitz estimate of the vorticity  on the boundary is 
\textit{at most} single exponential growth
near the stagnation point.
\end{abstract}

\maketitle

%%%%%%%%%%%%%%%%%

%\tableofcontents

%\listoffigures

%\baselineskip18pt  %% wide line space
%\parindent18pt     %% hanging at the paragraph

\section{Introduction}

Let $\dom$ be a two-dimensional domain. We 
consider the Euler equation on $\dom$, 
in the vorticity formulation:
\begin{equation}\label{eq:Euler}
\omega_t+(u\cdot\grad)\omega=0,
\quad \omega(x,0)=\omega_0(x).
\end{equation}
Here $\omega$ is the fluid vorticity, and 
$u$ is the velocity 
of the flow determined by the Biot-Savart law. 
We impose a no flow condition for the velocity at the boundary: 
$u \cdot n = 0$ on $\partial D$, 
where $n$ is the unit normal vector on the boundary.
This implies the formula:
\begin{equation}\label{eq:u}
u(x,t)=\grad^{\bot}\int_{\dom}G_{\dom}(x,y)\omega(y,t)dy,
\end{equation}
where $G_{\dom}$ is the Green function for the Dirichlet problem in $\dom$ 
and $\grad^{\bot}=(\partial_{x_2},-\partial_{x_1})$.
Global regular solutions to \eqref{eq:Euler} 
was proved by Wolibner \cite{W} and H\"{o}lder \cite{H}
and there are huge literature on this problem.
We are concerned with the question 
how fast the maximum of the gradient of the vorticity can grow 
as $t \rightarrow \infty$. 
When $\dom$ is a smooth bounded domain, 
the best known upper bound on the growth is double-exponential \cite{Y},
but the question whether such upper bound is sharp 
has been open for a long time. 
Very recently, 
Kiselev and Sverak \cite{KS} answered the question affirmatively
for the case the domain is a ball. 
They gave an example of the solution growing with double exponential rate.
On the other hand, Kiselev and Zlatos \cite{KZ} considered
the 2D Euler flow on some bounded domain with certain cusps.
They showed that the gradient of vorticity blows up  at the cusps 
in finite time. 
These solutions are constructed by imposing certain symmetries on 
the
initial data, which leads 
%solutions leading 
to a \textit{hyperbolic flow scenario} 
%which occurs 
near a stagnation point on the boundary. 
More precisely, 
%they showed lower bounds of the gradient of vorticity in such a way that 
by the hyperbolic flow scenario,
particles on the boundary (near the stagnation point) head for the stagnation point for all time.
Moreover the relation between this scenario and 
the geometry of the boundary plays a crucial role in 
the double exponential growth or the formation of the singularity.  

Thus it would be an interesting question to ask how the geometry of the 
boundary affects the growth of the solution in the hyperbolic flow scenario. 
In this paper, we consider a unit square
$\dom=\{(x_1,x_2):0<x_1+x_2<\sqrt{2},0<-x_1+x_2<\sqrt{2}\}$,  
and under a simple symmetry condition 
the growth of 
the Lipschitz norm of
the hyperbolic flow on the boundary is shown to be 
\textit{at most} single exponential 
near the stagnation point.

To state our result precisely, 
we rewrite \eqref{eq:Euler} into the whole plane $\mathbb{R}^2$, 
by change of valuables and reflection argument.
By \cite{Taniuchi,TTY} (see also \cite{S}), 
we see that the solution uniquely exists  in
\begin{equation*}
u\in C([0,\infty);L^\infty),\quad 
\omega\in L^\infty_{loc}
([0,\infty):%Y^{\Theta}_{ul} \cap 
L^\infty).
\end{equation*}
% and is unique, where $Y^{\Theta}_{ul}$ calls ``Yudovich space",
%the important inclusion must be $L^\infty\subset Y^{\Theta}_{ul}$.
Note in this case, the initial vorticity may not be continuous. 
However we see that 
%if  $\omega\in Y^{\Theta}_{ul}$, then 
$-\nabla (-\Delta)^{-1}\omega= u \in W^{1,1}_{loc}$, 
since the Riesz transform $\nabla (-\Delta)^{-1/2}$ is bounded
from $L^\infty$ to $BMO$. 
Then by the ODE theory of DiPerna-Lions \cite{DL}[Theorem III.2], 
we can define the trajectory $\gamma_X(t)$ corresponding to 
the 2D Euler equation:
\begin{equation*}
\frac{d\gamma_X(t)}{dt}=u(\gamma_X(t),t),\quad \gamma_X(0)=X\in\mathbb{R}^2
\end{equation*}
%Kiselev and Sverak \cite{KS} constructed an initial data 
%for the two-dimensional Euler equation in a disk for which the gradient of vorticity exhibits double exponential growth in time for all times. 
%The above results show gradient of vorticity grows at a stagnation point on the boundary 
%and its gradient is tangent to the boundary
(c.f. \cite{Z}). The following is the main theorem.
%In this paper, such a gradient of vorticity growth at a stagnation point is at most single exponential if the domain is $D$,
%and the stagnation point is at the origin. The main result is as follows:

\begin{thm}\label{thm:main}
Let $\omega_0$ be a Lipschitz function and 
odd with respect to the vertical axis, namely, $\omega_0(x_1,x_2)=-\omega_0(-x_1,x_2)$. 
Then there is a universal constant $\const$ such that the following
 statement holds true:
For any $T>0$, there exist $\delta>0$ and the solution of \eqref{eq:Euler} 
such that  
\[
|\omega(x,t)|\le \|\omega_0\|_{\rm{Lip}} |x| e^{\const \normi{\omega_0}t}
\qtext{ for $x\in\bdy\cap B(0,\delta)$ and $t<T$}.
\]
The above estimate exhibits single exponential bound of the Lipschitz norm
at the origin. 
\end{thm}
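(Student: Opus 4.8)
\emph{Proof strategy.}
Since the vorticity is transported along the trajectories recalled above, $\omega(\gamma_X(t),t)=\omega_0(X)$ for every $X$, and because $\omega_0$ is odd in $x_1$ it vanishes at the corner; hence $|\omega(\gamma_X(t),t)|\le\|\omega_0\|_{\rm Lip}\,|X|$. The theorem therefore follows once we control how fast a boundary trajectory can fall into the origin: if $\gamma_X(t)=x\in\bdy\cap B(0,\delta)$, we must show $|X|\le|x|\,e^{\const\normi{\omega_0}t}$. The no-flow condition keeps the trajectory reaching $x$ on $\bdy$ — so, near the origin, on one of the two edges of the corner — and, writing $r(\tau)=|\gamma_X(\tau)|$, we have $|\dot r(\tau)|\le|u(\gamma_X(\tau),\tau)|$. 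A Gronwall argument then yields $|X|\le|x|\,e^{\const\normi{\omega_0}t}$ and fixes an admissible $\delta=\delta(T,\normi{\omega_0})$, small enough that backward trajectories from $B(0,\delta)$ remain, for times $<T$, in the neighbourhood where the next estimate holds, \emph{provided} one has the pointwise velocity bound
\begin{equation}\label{eq:keyvel}
|u(x,t)|\le\const\normi{\omega_0}\,|x|\qquad\text{for }x\in\bdy\text{ near the origin,}
\end{equation}
uniformly in $t$ — recall $\normi{\omega(\cdot,t)}=\normi{\omega_0}$ and that $\omega(\cdot,t)$ stays odd in $x_1$.

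To prove \refeq{keyvel} I would unfold the right angle. At a fixed time the stream function $\psi(x)=\int_\dom G_\dom(x,y)\omega(y)\,dy$ solves $-\Delta\psi=\omega$ in $\dom$, vanishes on $\bdy$, and is odd in $x_1$; hence near the origin $\psi$ vanishes on the two edges of the corner \emph{and} on the bisector $\{x_1=0\}$. Reflecting $\psi$ oddly — and extending $\omega$ by the matching odd reflections — across these lines and their successive images generates the dihedral group $D_4$ of the square and produces a function $\til\psi$ on a disc $B_\rho(0)$ with $-\Delta\til\psi=\til\omega$, $\normi{\til\omega}=\normi{\omega_0}$, and $\til\psi(gx)=\det(g)\,\til\psi(x)$, $\til\omega(gx)=\det(g)\,\til\omega(x)$ for all $g\in D_4$; in particular $\til\psi$ vanishes on the four mirror lines through the origin. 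As the origin is now an interior point, $\til\psi\in W^{2,p}_{\rm loc}(B_\rho)$ for every $p<\infty$, so $\nabla\til\psi$ is continuous with $\nabla\til\psi(0)=0$. On $\bdy$ the velocity is tangential, and its tangential component on an edge is, up to a fixed constant, a single first derivative of $\til\psi$ along that edge; thus \refeq{keyvel} is equivalent to the Lipschitz estimate $|\nabla\til\psi(x)|\le\const\normi{\omega_0}\,|x|$ near the origin — one power of $|x|$ sharper than what $W^{2,p}$ regularity ($C^{1,\alpha}$) yields.

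This sharp linear bound is the crux, and the place where the $\pi/2$ geometry is used. Split $\til\psi=\til\psi_N+\til\psi_h$ on $B_\rho$, with $\til\psi_N$ the Newtonian potential of $\til\omega$ and $\til\psi_h$ harmonic. Then $\til\psi_h$ inherits the $D_4$ sign symmetry; there is no invariant harmonic quadratic ($x_1^2-x_2^2$ is even in $x_1$, and $x_1x_2$ is symmetric under the coordinate swap rather than anti-symmetric), so the first admissible harmonic polynomial is $\mathrm{Im}\,z^4$, whence $\til\psi_h$ vanishes to order four at the origin and, using $\|\til\psi\|_{L^\infty(B_\rho)}\le\const\normi{\omega_0}$, contributes $O(\normi{\omega_0}|x|^3)$ to $\nabla\til\psi$ on an edge. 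For $\til\psi_N$, I would evaluate the relevant derivative at an edge point $x$ and fold the integral over a fundamental sector of $D_4$ by means of $\til\omega(gw)=\det(g)\til\omega(w)$; this replaces the Biot--Savart kernel by a symmetrised kernel $\mathcal K(w)=\sum_{g\in D_4}\det(g)\,(x-gw)_j/|x-gw|^2$ whose leading terms cancel. A direct expansion then shows that $\mathcal K$ decays with extra powers near $w=0$ and in the far field, while near the diagonal point the residual singularity has size $\sim|w_1-w_2|/|x-w|^2$, integrable over a region of diameter $\sim|x|$; altogether $\int_{B_\rho}|\mathcal K(w)|\,dw\le\const|x|$, hence $|\nabla\til\psi_N(x)|\le\const\normi{\omega_0}|x|$. (Equivalently, expanding $\til\psi$ in the angular modes $a_k(r)\sin 4k\theta$ forced by the symmetry, the radial ODEs have indicial exponents $\pm4k$, so there is no resonance with the $r^2$ particular solution — unlike the torsion function of a $\pi/2$ wedge — and one gets $|a_k'(r)|+r^{-1}|a_k(r)|\le\const k^{-2}\normi{\omega_0}\,r$, which sums.) Combining the two parts gives \refeq{keyvel}, and the Gronwall step finishes the proof. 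The main obstacle is precisely this cancellation: estimated naively, the Biot--Savart integral — or the torsion function at a right angle — carries an extra factor $\log(1/|x|)$, which is what produces only a double exponential for a general smooth domain, and it is removed here by the full $D_4$ cancellation forced jointly by the odd symmetry and the $\pi/2$ angle.
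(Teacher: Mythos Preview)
Your overall strategy matches the paper's: reduce to the pointwise velocity bound $|u(x,t)|\le C\normi{\omega_0}\,|x|$ on the boundary near the corner, then apply Gronwall along boundary trajectories and use $\omega(x,t)=\omega_0(\gamma_x^{-1}(t))$ together with $\omega_0(0)=0$. The paper states this bound as its key Lemma (componentwise, $|u_j/x_j|\le C\normi{\omega}$ on a cone $\dom_a$), and the mechanism is exactly the $D_4$ cancellation you identify. The difference is in how the bound is established. The paper works with the explicit Green function of the square, an infinite image sum over $\Z^2$, and splits into $n=(0,0)$ versus $n\neq(0,0)$. After using the odd-in-$x_1$ symmetry of $\omega$, the eight pieces of the $n=0$ term are precisely your symmetrised kernel $\mathcal K$, and the paper then verifies by direct computation the two cancellations you sketch: the far-field $O(|x|/|y|^3)$ decay for $|y|\ge 2|x|$, and the near-diagonal integrals (of the type you describe, with residual numerator $\sim|y_1-y_2|$) for $|y|\le 2|x|$. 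The $n\neq 0$ tail is summed via an $|n|^{-3}$ bound obtained from a further pairwise cancellation. Your route---reflect locally to a disc and split $\tilde\psi=\tilde\psi_N+\tilde\psi_h$---replaces that global tail estimate by the observation that the $D_4$-odd harmonic remainder has first admissible mode $\mathrm{Im}\,z^4$, hence contributes $O(|x|^3)$; this is cleaner and, unlike the paper's argument, does not rely on the square's global reflection structure, so it would transfer to any domain with a right-angle corner and the same bisector symmetry. Both routes give the same estimate; yours explains structurally why the $\log(1/|x|)$ is absent, the paper's confirms it by hand.
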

\noindent
{\bf Remark.}  (1)\ 
We have showed the upper bound of certain hyperbolic flows 
around the stagnation point. It seems likely that our example grows 
exactly with the single exponential rate, but we were not able to
prove it so far.

(2)\ For the case when $\dom$ is the unit square 
$[0,1] \times [0,1]$ with the
periodic boundary condition, Zlatos \cite{Z} constructed 
 examples of the $C^{1,\gamma}$ solutions which exhibit single 
exponential growth, where the flows are also of hyperbolic and have 
odd symmetry in both axes.  
Moreover Hoang-Radosz \cite{HR} considered the same problem and 
they showed that the compression of the fluid induced by the hyperbolic flow 
scenario alone is not sufficient for the double exponential growth of 
the smooth solution (see also \cite{D1,D2}).

%(3)\ By using our strategy of the proof, 
%we can also construct the 2D Euler flows on the upper half of the 
%unit ball which exhibit the double exponential growth 
%on the center of the half ball.

\section{Proof of the main theorem}
For $x=(x_1,x_2)\in\Rd$ we let $\til x =(-x_1,x_2)$, 
$\ol{x} =(x_1,-x_2)$, 
$\str{x} =(x_2,x_1)$. 
Let $\dom_+ =\{x\in\dom:x_1>0\}$. 
The Green function $G_{\dom}$ is given explicitly by
\[
G_{\dom}(x,y)=\frac{1}{2\pi}\sum_{n\in\Z^2}
\biggl(\log\frac{|x-2m-y||-x-2m-y|}{|\str x-2m-y||-\str x-2m-y|}\biggr),
\]
where $m=(m_1,m_2)=(\frac{n_1-n_2}{\sqrt{2}},\frac{n_1+n_2}{\sqrt{2}})$.
We can obtain the above  $G_{\dom}$ just using reflection and $\pi/4$-rotation (c.f. \cite{Z}).
Since  $\omega(\cdot,t)\in L^{\infty}(\dom)$ is odd with respect to the vertical axis, 
then we have 
\begin{equation}\label{eq:intp}
\int_{\dom}G_{\dom}(x,y)\omega(y,t)dy
=\frac{1}{2\pi}\sum_{n\in\Z^2}\int_{\dom_+}\biggl(\log
\frac{|x-2m-y||-x-2m-y||\ol{\str x}-2m-y||\til{\str x}-2m-y|}
{|\str x-2m-y||-\str x-2m-y||\til x-2m-y||\ol x-2m-y|}\biggr)\omega(y,t)dy.
\end{equation}
We now give the key lemma.
\begin{lem}\label{lem:uj}
Let $\omega(\cdot,t)\in L^{\infty}(\dom)$ be odd with respect to the vertical axis.
Let $a>1$ and let $\dom_a=\{x\in\dom_+: a x_1 \ge  x_2\}$. 
There exists a constant $\clabel{c:uj}$ depending only on 
$a$ such that

\begin{equation}\label{eq:uj}
\sup_{x\in\dom_a\cap B(0,\frac{1}{2})}\biggl|\frac{u_j(x,t)}{x_j}\biggr|
\le \refc{uj}\normi{\omega(\cdot,t)}
\quad (j=1,2).
\end{equation}
\end{lem}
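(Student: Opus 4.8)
The plan is to estimate $u_j(x,t)/x_j$ directly from the series representation \refeq{intp}, exploiting the fact that near the stagnation point at the origin the relevant boundary pieces exhibit hyperbolic (odd) symmetry, so that the velocity vanishes linearly there. Write $u(x,t)=\grad^\bot\int_\dom G_\dom(x,y)\omega(y,t)\,dy$ and differentiate \refeq{intp} under the summation and integral. It is convenient to split the sum into the \emph{near block} $m=0$ (the eight ``image points'' $\pm y$, $\pm\str y$ and their conjugates that are closest to $\dom$) and the \emph{far blocks} $m\neq 0$. For the far blocks, all denominators $|x-2m-y|$ etc.\ are bounded below by a constant multiple of $|m|$ uniformly for $x\in B(0,\tfrac12)$ and $y\in\dom_+$, so each far term, together with its $x$-derivative, is $O(|m|^{-2})$ after the cancellations forced by pairing $x$ with $\str x$ and $\til x,\ol x$; summing over $m\in\Z^2\sm\{0\}$ gives an absolutely convergent bound $\le C\normi{\omega(\cdot,t)}$, and this part is even $O(|x|)$, hence harmless when divided by $x_j$.

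The heart of the matter is the near block. Here the integrand of \refeq{intp} is, up to the $\tfrac1{2\pi}$ factor,
\[
\log\frac{|x-y|\,|x+y|\,|\ol{\str x}-y|\,|\til{\str x}-y|}{|\str x-y|\,|{-\str x}-y|\,|\til x-y|\,|\ol x-y|},
\]
and one must show that its $\grad^\bot$, divided by $x_j$, stays bounded on $\dom_a\cap B(0,\tfrac12)$ against $\int_{\dom_+}|\omega|$. The key point is that the eight points $\pm x$, $\pm\str x$, $\til x,\ol x$, $\ol{\str x},\til{\str x}$ are organized into symmetric pairs reflecting across the two diagonal walls of the square; the $\grad^\bot$ of $\log|z-y|$ is $(z-y)^\bot/|z-y|^2$, so the dangerous $1/|z-y|$ singularity appears only when $y$ is near one of these image points, i.e.\ when $y$ is near the \emph{boundary segment} through which $x$ is being reflected. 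Because $\omega_0$ — hence $\omega(\cdot,t)$ — is merely $L^\infty$, the integral $\int_{\dom_+}|z-y|^{-1}\,dy$ is finite (a $1/r$ singularity is integrable in two dimensions), giving a bound $C\normi{\omega(\cdot,t)}$ for $u_j(x,t)$ alone; the extra factor $1/x_j$ is recovered from the restriction $x\in\dom_a$, which keeps $x$ in a cone $ax_1\ge x_2$ away from the wall $x_1=0$ and forces $|x|\le C x_1$, while the cancellation between a reflected pair (e.g.\ $\log|x-y|-\log|\str x-y|$ has $\grad^\bot$ equal to $(x-y)^\bot/|x-y|^2-(\str x-y)^\bot/|\str x-y|^2$, which is $O(|x-\str x|/(\text{dist})^{?})=O(x_j/\dots)$ because $x-\str x=(x_1-x_2,x_2-x_1)$ scales like $x_j$ on this cone) produces the missing power of $x_j$ in the $j$-th component.

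Concretely, the steps are: (i) differentiate \refeq{intp}, justify termwise differentiation by the uniform far-block bound above; (ii) for $m\neq 0$ bound each term by $C|m|^{-2}\normi{\omega(\cdot,t)}\int_{\dom_+}dy$ and sum; (iii) for the near block, pair the eight logarithms into four reflected pairs across the diagonals, compute $\grad^\bot$ of each pair, and show that the $j$-component of the difference is $\le C|x|\big(|z-y|^{-2}+|z'-y|^{-2}\big)$ times a geometric factor, where one then uses $|x|\le C_a x_j$ on $\dom_a$ (for $j=1$ from $ax_1\ge x_2$, and for $j=2$ trivially since $x_2\ge$ a fixed multiple of $|x|$ there too) to extract the $x_j$; (iv) integrate the resulting $|z-y|^{-2}$ against $\normi{\omega(\cdot,t)}$ over $\dom_+$ — this is where one must be careful: a bare $|z-y|^{-2}$ is \emph{not} integrable, so the cancellation in (iii) must genuinely improve the singularity to $|z-y|^{-1}$ (or better, to a logarithm) before integrating. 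I expect step (iv), establishing that the reflected-pair cancellation upgrades $|z-y|^{-2}$ to an integrable singularity \emph{uniformly} as $x\to 0$ along the cone, to be the main obstacle; it amounts to a careful Taylor expansion of the difference quotient of $\grad^\bot\log$ at the two nearby image points, keeping track of which coordinate the $1/x_j$ is charged to, and using that the two image points in each pair are distance $O(x_j)$ apart while their common distance to $y\in\dom_+$ controls the denominator.
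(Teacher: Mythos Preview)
Your outline has the right shape---split into far blocks $m\neq 0$ and the near block $m=0$, and exploit the reflected-pair structure to pull out a factor of $x_j$---but two places do not close as written.

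First, the far-block bound. You assert that each far term is $O(|m|^{-2})$ after cancellation and then sum; but $\sum_{m\in\Z^2\sm\{0\}}|m|^{-2}$ diverges, so this does not give an absolutely convergent bound. One more order of decay is needed. The paper obtains it by pairing more carefully: after the factor $x_1$ has been extracted algebraically (see below), each of the four resulting kernels $A_n,B_n,C_n,D_n$ is individually only $O(|n|^{-2})$, but the differences $A_n-B_n$ and $D_n-C_n$ are $O(|n|^{-3})$, because the leading $4m_1m_2$ pieces in the numerators cancel once the denominators are expanded. That extra cancellation is what makes the lattice sum converge.

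Second, and this is the heart of the lemma, your step (iv) is precisely where the work lies, and the sketch does not carry it through. The paper does not Taylor-expand reflected pairs hoping to upgrade a $|z-y|^{-2}$ singularity to $|z-y|^{-1}$. Instead it first combines the eight fractions algebraically so that $u_1(x,t)=\dfrac{2x_1}{\pi}\sum_n\int_{\dom_+}(A_n-B_n-C_n+D_n)\,\omega\,dy$; the factor $x_1$ comes out \emph{exactly}, not asymptotically. For $n=0$ one is left with four explicit kernels such as $(x_2-y_2)y_1\big/(|x-y|^2|\til x-y|^2)$, and the task is to show their $\dom_+$-integrals are bounded. This is done by a region split in $y$: for $|y|\ge 2|x|$ each kernel (after one more pairing) is $O(|x|/|y|^3)$, which integrates to a constant; for $|y|<2|x|$ all image distances except $|x-y|$ and $|\str x-y|$ are $\gtrsim |x|$, reducing matters to two local integrals. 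The worst of these, $\int \frac{(x_2-y_2)y_1}{|x-y|^2|\til x-y|^2}\,dy$ over $|y|<2|x|$, still needs a further split into $\{y_1\le 2x_1\}$ and $\{y_1>2x_1\}$; on the second piece $|\til x-y|\sim|x-y|$ and the bound comes out as $C\log(1+x_2^2/x_1^2)\le C\log(1+a^2)$. This is exactly where, and only where, the cone condition $x_2\le ax_1$ enters. Your proposal invokes the cone merely to compare $|x|$ with $x_j$, but its real role is to control this logarithm; without identifying this specific piece and this split, the near-block estimate does not close.
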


\begin{proof}%[Proof of \reflem{uj}]
Let us only consider $j=1$ because $j=2$ follows by symmetry.
Let $x\in\dom_a\cap B(0,\frac{1}{2})$.
%By assumption, 
%there is a constant $L>0$ such that
%\begin{equation}\label{eq:Lip}
%|\omega(y_1,y_2,t)|=|\omega(y_1,y_2,t)-\omega(0,y_2,t)|\le L|y_1|\le \const
%\qtext{ for } (y_1,y_2)\in\dom.
%\end{equation}
By \refeq{u} and \refeq{intp}, we have
\[
\begin{split}
u_1(x,t)&=\frac{1}{2\pi}\sum_{n\in\Z^2}
\int_{\dom_+}\Biggl[
\frac{x_2-2m_2-y_2}{|x-2m-y|^2}
-\frac{x_2-2m_2-y_2}{|\til x-2m-y|^2}
-\frac{x_2-2m_1-y_1}{|\str x-2m-y|^2}
+\frac{x_2-2m_1-y_1}{|\ol{\str x}-2m-y|^2} \\
&\qquad\qquad\qquad
-\frac{x_2+2m_2+y_2}{|\ol x-2m-y|^2}
+\frac{x_2-2m_2-y_2}{|-x-2m-y|^2}
+\frac{x_2+2m_1+y_1}{|\til{\str x}-2m-y|^2}
-\frac{x_2+2m_1+y_1}{|-\str x-2m-y|^2}
\Biggr]\omega(y,t)dy \\
&=\frac{2x_1}{\pi}\sum_{n\in\Z^2}
\int_{\dom_+}\Biggl[
\frac{(x_2-2m_2-y_2)(2m_1+y_1)}{|x-2m-y|^2|\til x-2m-y|^2}
-\frac{(x_2-2m_1-y_1)(2m_2+y_2)}{|\str x-2m-y|^2|\ol{\str x}-2m-y|^2} \\
&\qquad\qquad\qquad
-\frac{(x_2+2m_2+y_2)(2m_1+y_1)}{|\ol x-2m-y|^2|-x-2m-y|^2}
+\frac{(x_2+2m_1+y_1)(2m_2+y_2)}{|\til{\str x}-2m-y|^2|-\str x-2m-y|^2}
\Biggr]\omega(y,t)dy.
\end{split}
\]
The right above first two terms are essentially the same as those in (2.4) in \cite{Z}.
Due to the extra symmetry, we have two more terms which bring more cancellations.
More precisely, leading terms with singularity as  in Lemma 2.1 \cite{Z} and Lemma 3.1 \cite{KS} do not appear.
Let
\[
\begin{split}
A_n(x,y)&=\frac{(x_2-2m_2-y_2)(2m_1+y_1)}{|x-2m-y|^2|\til x-2m-y|^2}, 
\qquad B_n(x,y)=\frac{(x_2-2m_1-y_1)(2m_2+y_2)}
{|\str x-2m-y|^2|\ol{\str x}-2m-y|^2}, \\
C_n(x,y)&=\frac{(x_2+2m_2+y_2)(2m_1+y_1)}{|\ol x-2m-y|^2|-x-2m-y|^2},
\quad D_n(x,y)=\frac{(x_2+2m_1+y_1)(2m_2+y_2)}
{|\til{\str x}-2m-y|^2|-\str x-2m-y|^2}.
\end{split}
\]
We need  to divide into two parts:
$n\not=(0,0)$ case and $n=(0,0)$ case. 
If $n\neq(0,0)$, we can control each terms for $|n|^{-3}$ order. Thus, summation of these terms converge. 
More precisely, we have the following calculation for $n\not =(0,0)$:
%If $n\neq(0,0)$, 
%then
\[
\begin{split}
&|A_n(x,y)-B_n(x,y)|  \\
\le&\biggl|\frac{-4m_1 m_2}{|x-2m-y|^2|\til x-2m-y|^2}+
\frac{4m_1m_2}{|\str x-2m-y|^2|\ol{\str x}-2m-y|^2}\biggr|+\const |n|^{-3} \\
=&\frac{|4m_1m_2||(|x|^2+|2m+y|^2)(2m_1-2m_2+y_1-y_2)x_2-|x|^2[(2m_1+y_1)^2-(2m_2+y_2)^2]|}{|x-2m-y|^2|\til x-2m-y|^2|\str x-2m-y|^2|\ol{\str x}-2m-y|^2}+\const |n|^{-3} \\
\le&\const |n|^{-3}
\end{split}
\]
and 
\[
\begin{split}
&|-C_n(x,y)+D_n(x,y)|  \\
\le&\biggl|\frac{-4m_1 m_2}{|\ol x-2m-y|^2|-x-2m-y|^2}+
\frac{4m_1m_2}{|\til{\str x}-2m-y|^2|-\str x-2m-y|^2}\biggr|+\const |n|^{-3} \\
=&\frac{|4m_1m_2||(|x|^2+|2m+y|^2)(2m_1+2m_2+y_1+y_2)x_2-|x|^2[(2m_1+y_1)^2-(2m_2+y_2)^2]|}{|\ol x-2m-y|^2|-x-2m-y|^2|\til{\str x}-2m-y|^2|-\str x-2m-y|^2}+\const |n|^{-3} \\
\le&\const |n|^{-3}.
\end{split}
\]
Therefore we obtain
\begin{equation}\label{eq:sum}
\biggl|\sum_{\substack{n\in\Z^2\\ n\neq(0,0)}}
\int_{\dom_+}
\bigl(A_n(x,y)-B_n(x,y)-C_n(x,y)+D_n(x,y)
\bigr)\omega(y,t)dy
\biggr|
\le\const\normi{\omega(\cdot,t)}\sum_{\substack{n\in\Z^2\\ n\neq(0,0)}}|n|^{-3}
\le\const\normi{\omega(\cdot,t)}.
\end{equation}

Next we consider the term $n=(0,0)$.
We show that
\begin{equation}\label{eq:zero}
\Biggl|\int_{\dom_+}
\Biggl[\frac{(x_2-y_2)y_1}{|x-y|^2|\til x-y|^2}
-\frac{(x_2-y_1)y_2}{|\str x-y|^2|\ol{\str x}-y|^2}
-\frac{(x_2+y_2)y_1}{|\ol x-y|^2|-x-y|^2}
+\frac{(x_2+y_1)y_2}{|\til{\str x}-y|^2|-\str x-y|^2}
\Biggr]\omega(y,t)dy
\Biggr|
\le\const\normi{\omega(\cdot,t)}.
\end{equation}
If $y\in\dom_+$ and $|y|\ge2|x|$, 
then
\[
\frac{x_2y_1}{|x-y|^2|\til x-y|^2}
\le\const\frac{|x|}{|y|^3}, 
\quad
\frac{x_2y_2}{|\str x-y|^2|\ol{\str x}-y|^2}\le\const\frac{|x|}{|y|^3},
\]
and
\[
\Biggl|-\frac{y_1y_2}{|x-y|^2|\til x-y|^2}
+\frac{y_1y_2}{|\str x-y|^2|\ol{\str x}-y|^2}
\Biggr|
=\frac{y_1y_2\bigl|4(|x|^2+|y|^2)(y_1-y_2)x_2-4|x|^2(y_1^2-y_2^2)\bigr|}
{|x-y|^2|\til x-y|^2|\str x-y|^2|\ol{\str x}-y|^2} 
\le\const\frac{|x|}{|y|^3}.
\]
We obtain the above estimates by the extra symmetry, and these show that leading terms with singularity as in Lemma 2.1 \cite{Z} and Lemma 3.1 \cite{KS} do not appear. 
The rest calculation are essentially the same as in \cite{KS,Z}.
These inequalities imply that (away from the origin)
%These inequalities imply that
\[
\Biggl|\int_{\dom_+\sm B(0,2|x|)}\Biggl[
\frac{(x_2-y_2)y_1}{|x-y|^2|\til x-y|^2}
-\frac{(x_2-y_1)y_2}{|\str x-y|^2|\ol{\str x}-y|^2}
\Biggr]\omega(y,t)dy\Biggr|
\le\const\normi{\omega(\cdot,t)}\int_{2|x|}^{\sqrt{2}}\frac{|x|}{r^2}dr
\le\const\normi{\omega(\cdot,t)}.
\]
In a similar way, we have
\[
\Biggl|\int_{\dom_+\sm B(0,2|x|)}
\Biggl[-\frac{(x_2+y_2)y_1}{|\ol x-y|^2|-x-y|^2}
+\frac{(x_2+y_1)y_2}{|\til{\str x}-y|^2|-\str x-y|^2}
\Biggr]\omega(y,t)dy\Biggr|
\le\const\normi{\omega(\cdot,t)}.
\]
Observe that if $y\in\dom_+\cap B(0,2|x|)$, 
then 
\[
|\ol{\str x}-y|,|\ol x-y|,|-x-y|,|\til{\str x}-y|,|-\str x-y|\ge \const |x|.
\]
Therefore, 
we obtain that (near the origin)
\[
\Biggl|\int_{\dom_+\cap B(0,2|x|)}
\Biggl[-\frac{(x_2+y_2)y_1}{|\ol x-y|^2|-x-y|^2}
+\frac{(x_2+y_1)y_2}{|\til{\str x}-y|^2|-\str x-y|^2}
\Biggr]\omega(y,t)dy\Biggr|
\le\const\normi{\omega(\cdot,t)}|x|^{-2}\int_{\dom_+\cap B(0,2|x|)}dy
\le\const\normi{\omega(\cdot,t)}
\]
and 
\[
\begin{split}
\Biggl|\int_{\dom_+\cap B(0,2|x|)}
\frac{(x_2-y_1)y_2}{|\str x-y|^2|\ol{\str x}-y|^2}
\omega(y,t)dy\Biggr| 
&\le\const\normi{\omega(\cdot,t)}|x|^{-1}\int_{0}^{2|x|}\int_{0}^{2|x|}
\frac{|x_2-y_1|}{|\str x-y|^2}dy_1dy_2 \\
&\le\const\normi{\omega(\cdot,t)}|x|^{-1}\int_{0}^{2|x|}\int_{0}^{2|x|}
\frac{z_1}{z_1^2+z_2^2}dz_1dz_2 \\
&=\const\normi{\omega(\cdot,t)}|x|^{-1}\int_{0}^{2|x|}\arctan 
\frac{2|x|}{z_1}dz_1\le\const\normi{\omega(\cdot,t)}.
\end{split}
\]
Finally we prove that
\[
\Biggl|\int_{\dom_+\cap B(0,2|x|)}
\frac{(x_2-y_2)y_1}{|x-y|^2|\til x-y|^2}
\omega(y,t)dy\Biggr|
\le\const\normi{\omega(\cdot,t)}.
\]
We separate the integral into two region.
Let $\dom_1=\dom_+\cap B(0,2|x|) \cap [0,2x_1]\times[0,1]$ and 
$\dom_2=\dom_+\cap B(0,2|x|) \cap [2x_1,1]\times[0,2x_2]$.
We have
\[
\begin{split}
\Biggl|\int_{\dom_1}
\frac{(x_2-y_2)y_1}{|x-y|^2|\til x-y|^2}
\omega(y,t)dy\Biggr|
&\le\const\normi{\omega(\cdot,t)}\int_{0}^{1}\int_{0}^{2x_1}\frac{|x_2-y_2|x_1}{|x-y|^2|\til x-y|^2}
dy_1dy_2 \\
&\le\const\normi{\omega(\cdot,t)}\int_{0}^{1}\int_{0}^{x_1}\frac{z_2 x_1}{(z_1^2+z_2^2)(x_1^2+z_2^2)}
dz_1dz_2 \\
&=\const\normi{\omega(\cdot,t)}\int_{0}^{1}\frac{x_1}{(x_1^2+z_2^2)}\arctan \frac{x_1}{z_1} dz_2 \\
&\le\const\normi{\omega(\cdot,t)}\arctan \frac{1}{x_1}
\le\const\normi{\omega(\cdot,t)}.
\end{split}
\]
Since $x_2\le a x_1$, we obtain
\[
\begin{split}
\Biggl|\int_{\dom_2}
\frac{(x_2-y_2)y_1}{|x-y|^2|\til x-y|^2}
\omega(y,t)dy\Biggr|
&\le \normi{\omega(\cdot,t)} \int_{\dom_2}
\frac{|x_2-y_2|y_1}{|x-y|^4}dy \\
&\le \const\normi{\omega(\cdot,t)} \int_{0}^{x_2}\int_{x_1}^{1}\frac{z_1 z_2}{(z_1^2+z_2^2)^2}
dz_1dz_2 \\
&\le \const\normi{\omega(\cdot,t)} \int_{0}^{x_2}\frac{z_2}{x_1^2+z_2^2}
dz_2 \\
&\le \const\normi{\omega(\cdot,t)} \log(1+\frac{x_2^2}{x_1^2}) 
\le \const\log(1+a^2)\normi{\omega(\cdot,t)}.
\end{split}
\]
Thus we have \refeq{zero}.
\end{proof}

\begin{proof}[Proof of \refthm{main}]
It is well known that $\normi{\omega(\cdot,t)}=\normi{\omega_0}$ and recall
that if $\omega_0$ is odd with respect to vertical axis, 
then $\omega(x,t)$ is also odd with respect to vertical axis.
Also recall that $\gamma_X(t)=(\gamma_{X,1}(t),\gamma_{X,2}(t))$ is the flow map corresponding to 
the 2D Euler evolution:
\begin{equation}\label{eq:trajectory}
\frac{d\gamma_X(t)}{dt}=u(\gamma_X(t),t),\quad\gamma_X(0)=X.
\end{equation}
Due to the boundary condition on $u$, the trajectories 
which start at the boundary stay on the boundary for all times.
Due to  Lemma \ref{lem:uj}, the following observation holds true: For any $T>0$, there is $\delta>0$ such that
the trajectory starting a point $X\in\bdy\cap B(0,\delta)$ stays in $\gamma_X(t)\in B(0,\frac{1}{2})$ for $t<T$. 
Let $x=\gamma_X(t)$. 
By \reflem{uj} and \refeq{trajectory},
we have
\[
\frac{d\gamma_{X,1}(t)}{dt}
%=u_1(\gamma_{X,1}(t),t)
%=u_1(x_1,t)
\ge %-\refc{uj}x_1
=-\refc{uj}\normi{\omega_0}\gamma_{X,1}(t)
\qtext{ for all }t>0.
\]
By Gronwall's lemma we have
$\gamma_{X,1}(t)\ge X_1 e^{-\refc{uj}\normi{\omega_0}t}$, 
so that $\gamma_{x,1}^{\inv}(t)\le x_1 e^{\refc{uj}\normi{\omega_0}t}$.
In a similar way, we obtain $\gamma_{x,2}^{\inv}(t)\le x_2 e^{\refc{uj}\normi{\omega_0}t}$.
Hence we have
\[
|\gamma_x^{\inv}(t)|\le|x| e^{\refc{uj}\normi{\omega_0}t}.
\]
Since $\omega(x,t)=\omega_0(\gamma_x^{\inv}(t))$ by the 2D Euler flow in the Lagrangian form, and $\omega_0$ is Lipschitz,
we obtain
\[
|\omega(x,t)|=|\omega_0(\gamma_x^{\inv}(t))|\le \|\omega_0\|_{\text{Lip}}|\gamma_x^{\inv}(t)|
\le \|\omega_0\|_{\text{Lip}} |x| e^{\refc{uj}\normi{\omega_0}t}.
\]
\end{proof}

\section{Conclusion}
We have seen that under some symmetry conditions of the initial data
for the Euler equations in a unit square, 
the vorticity gradient grows at most single exponential rate along 
the boundary near 
 the stagnation point on the corner.
Taking account of the results \cite{KS,KZ} concerning growing 
solutions in other domains, 
we observe that the angle of the corner of the boundary
is related to the growth of the flows.
It seems likely that the sharper the angle of the corner becomes, 
the slower the growth rate of the solutions under 
the hyperbolic flow scenario gets. 
We will pursue this issue in a future work.
Note that we could not deal with 
the solution on the corner with general angle, 
since the symmetry of the domain plays an important role
in our construction of the solutions. 

%It would be also interesting to compare the growth rates
%between the no flow condition and the periodic condition.

\bibliographystyle{amsalpha}
\def\cprime{$'$}
\bibliography{eves}

\end{document}